\documentclass[a4paper,12pt]{amsart}
\usepackage{amsmath, amssymb, amsthm}
\usepackage{verbatim}
\usepackage{hyperref}
\usepackage{enumerate}
\usepackage{xypic}

\newcounter{theorem}
\newtheorem{thm}[theorem]{Theorem}
\newtheorem{lemma}[theorem]{Lemma}
\newtheorem{prop}[theorem]{Proposition}
\newtheorem{cor}[theorem]{Corollary}

\theoremstyle{remark}
\newtheorem*{remark*}{Remark}
\newtheorem{remark}[theorem]{Remark}

\numberwithin{equation}{section}
\numberwithin{theorem}{section}

\newcommand{\e}{\epsilon}

\newcommand{\R}{\mathbb{R}}
\newcommand{\C}{\mathbb{C}}
\newcommand{\N}{\mathbb{N}}
\renewcommand{\setminus}{\backslash}

\renewcommand{\emptyset}{\varnothing}

\newcommand{\id}{\mathrm{id}}

\newcommand{\ev}{\mathrm{ev}}

\newcommand{\dr}{\mathrm{dr}}

\newcommand{\labelledthing}[2]{\hspace{4pt}\buildrel {#2} \over #1 \hspace{3pt}} 

\newcommand{\labelledrightarrow}{\labelledthing{\longrightarrow}}

\begin{document}

\title{High-dimensional $\mathcal Z$-stable AH algebras}
\author{Aaron Tikuisis}
\address{Institute of Mathematics\\
University of Aberdeen\\
Aberdeen, UK AB24 3UE}
\urladdr{http://homepages.abdn.ac.uk/a.tikuisis/}
\email{a.tikuisis@abdn.ac.uk}

\begin{abstract}
It is shown that a $C^*$-algebra of the form $C(X,U)$, where $U$ is a UHF algebra, is not an inductive limit of subhomogeneous $C^*$-algebras of topological dimension less than that of $X$.
This is in sharp contrast to dimension-reduction phenomenon in (i) simple inductive limits of such algebras, where classification implies low-dimensional approximations, and (ii) when dimension is measured using decomposition rank, as the author and Winter proved that $\dr(C(X,U)) \leq 2$.
\end{abstract}

\thanks{The author is supported by an NSERC PDF}

\subjclass[2010]{46L35, 46L05, (46L80, 47L40, 46L85)}

\keywords{Nuclear $\mathrm C^*$-algebras; decomposition rank; nuclear dimension; UHF algebra; $\mathcal Z$-stability; Jiang-Su algebra; approximately homogeneous $C^*$-algebras}

\maketitle

\section{Introduction}

Consider $C^*$-algebras that take the form of a direct sum of algebras of continuous functions from a topological space to a matrix algebra; call this class $\mathcal C$.
Now consider the class A$\mathcal C$ of algebras that are inductive limits of algebras in $\mathcal C$.
Such algebras, including AF, AI, A$\mathbb T$, and (some) AH algebras, have arisen naturally, for instance, as crossed products of the Cantor set or the circle by minimal homeomorphisms.
However, the present purpose of considering this class of $C^*$-algebras is as a test case for phenomena among the broader, less-understood class of finite nuclear $C^*$-algebras.

A mixture of classification and other arguments has shown that there is a dichotomy amongst the simple $C^*$-algebras in A$\mathcal C$, dividing them into algebras of low and high topological dimension.
Classification arguments, on the one hand, show that for a simple algebra in A$\mathcal C$, if it is an inductive limit of building blocks (in $\mathcal C$) with bounded topological dimension (or even ``slow dimension growth''), or if it is $\mathcal Z$-stable, then it is an inductive limit of algebras in $\mathcal C$ with topological dimension at most three \cite{ElliottGongLi:AHclassification,Gong:SimpleReduction,Lin:LocalAH}.
(Note that one can show, without classification, that slow dimension growth implies $\mathcal Z$-stability; see \cite{Toms:rigidity,Toms:stability,Winter:pure}, so $\mathcal Z$-stability should be viewed as the weakest of these hypotheses.)
By a general $\mathcal Z$-stability theorem of Winter \cite{Winter:pure}, it follows that this is also the case for simple algebras in A$\mathcal C$ with finite nuclear dimension (or decomposition rank).
Villadsen first produced examples of simple algebras in A$\mathcal C$ that don't have bounded-dimension building blocks \cite{Villadsen:Perforation}; these examples, and their high-dimensional properties, were further analyzed by Toms and Winter \cite{TomsWinter:V1}.

The aforementioned dimension reductions (going from bounded dimension/slow dimension/$\mathcal Z$-stability to dimension at most three) owes itself to (i) simplicity and (ii) dimension-reducing effects of tensoring with $\mathcal Z$.
Villadsen's high-dimensional algebras show that simplicity alone does not produce dimension reduction.
Moreover, the author and Winter showed that dimension-reduction, in terms of decomposition rank (and therefore also nuclear dimension), does occur even for nonsimple, $\mathcal Z$-stable algebras in A$\mathcal C$: if $A \in \text{A}\mathcal C$ then $\dr(A \otimes \mathcal Z) \leq 2$ \cite{TW:Zdr}.
This result has recently been extended, by the author and others, to algebras $A$ that are inductive limits of subhomogeneous algebras \cite{ENSTW}.

The main result here is that certain non-simple $\mathcal Z$-stable algebras in A$\mathcal C$ (namely, algebras of the form $C(X,U)$ where $U$ is a UHF algebra) cannot be approximated by low-dimensional algebras in $\mathcal C$--or even from the significantly larger class of subhomogeneous $C^*$-algebras.
This result clarifies the role played by simplicity in classification results such as \cite{ElliottGongLi:AHclassification,Gong:SimpleReduction,Lin:LocalAH}.
It also provides the first example of a $\mathcal Z$-stable approximately subhomogeneous algebra $A$ which cannot be approximated by subhomogeneous algebras with bounded decomposition rank; it is expected (and largely entailed by classification conjectures) that this phenomenon cannot occur in the simple case.

In \cite{BBSTWW}, the author and others showed that many simple $\mathcal Z$-stable $C^*$-algebras have decomposition rank $1$, including a number of approximately subhomogeneous algebras that cannot be approximated by subhomogeneous algebras with decomposition rank less than $2$.
This and the present article give complementary accounts of the same phenomenon: that decomposition rank does not produce the same $K$-theoretic restrictions as subhomogeneous dimension.

In \cite{KirchbergRordam:pi3}, Kirchberg and R\o rdam devised a way to show that for any commutative $C^*$-algebra $C$, $C \otimes 1_{\mathcal O_2}$ can be approximated within $C \otimes \mathcal O_2$ by commutative $C^*$-algebras with one-dimensional spectrum.
They use this result to show that a large swathe of strongly purely infinite, non-simple algebras are approximated by algebras in $\mathcal C$ with one-dimensional spectrum; the result also plays a crucial role in the dimension-reduction result of \cite{TW:Zdr}.
Kirchberg and R\o rdam's result rests mainly on the fact that $\mathcal O_2$ has trivial $K$-theory, which is closely tied to the existence of an $\mathcal O_2$-relativized retract $D^2 \to S^1$; that is, a solution to
\[
\xymatrix{
C(S^{1}) \ar@{-->}[r]^-{\exists} \ar[rd]_-{\id \otimes 1_{A}} & C(D^2,\mathcal O_2) \ar[d]^{f \mapsto f|_{S^{1}}} \\
& C(S^{1},\mathcal O_2),
}\]
It is not difficult to adapt their argument (as we do in Section \ref{sec:4}) to show that if $A$ is a $C^*$-algebra for which we can solve
\begin{equation}
\label{eq:RelRetract}
\xymatrix{
C(S^{n-1}) \ar@{-->}[r]^-{\exists} \ar[rd]_-{\id \otimes 1_{A}} & C(D^n,A) \ar[d]^{f \mapsto f|_{S^{n-1}}} \\
& C(S^{n-1},A),
} \end{equation}
then for any $n$-dimensional space $X$, $C(X) \otimes 1_A$ can be approximated in $C(X,A)$ by commutative $C^*$-algebras with $(n-1)$-dimensional spectrum.
Our main result arises by showing that the converse is true: if $C(X) \otimes 1_A$ can be approximated in $C(X,A)$ by $(n-1)$-dimensional commutative (or even subhomogeneous) algebras then there is an $A$-relativized retract of $D^n$ onto $S^{n-1}$, as in \eqref{eq:RelRetract}.

Let us introduce our notation precisely before clearly stating the main result.
For a $C^*$-algebra $A$, a $C^*$-subalgebra $B$, and a class $\mathcal S$ of $C^*$-algebras, we say that $B$ is \textbf{locally approximated} in $A$ by $C^*$-algebras in $\mathcal S$ if the following holds: for every finite subset $\mathcal F$ of $B$ and every $\e > 0$, there exists a $C^*$-subalgebra $C$ of $A$ such that $C \in \mathcal S$ and $\mathcal F \subset_\e C$.
Note that if $A$ is an inductive limit of algebras in $\mathcal S$ (or an inductive limit of inductive limits of algebras in $\mathcal S$, etc.) then it is locally approximated by algebras in $\mathcal S$.

The \textbf{topological dimension} of a commutative $C^*$-algebra $C(X)$ means, naturally, the dimension of $X$.
More generally, if $A$ is subhomogeneous, then the value of its decomposition rank and nuclear dimension coincide, and also agrees with a value obtained from the primitive ideal space \cite{Winter:drSH} (see also \cite[Corollary 3.18]{ENSTW} for an alternate proof); we continue to use the term topological dimension to refer to this value.

In the sequel, we identify $C(X)$ with the subalgebra $C(X, \mathbb C1_A)$ of $C(X,A)$.

\begin{thm}
\label{thm:MainThm}
Let $A$ be a unital $C^*$-algebra and let $n\in\N$.
The following are equivalent.
\begin{enumerate}[(i)]
\item
\label{MainThm1}
For every $n$-dimensional compact Hausdorff space $X$, $C(X)$ is approximated in $C(X,A)$ by commutative $C^*$-algebras of topological dimension at most $n-1$;
\item
\label{MainThm2}
For every $n$-dimensional compact Hausdorff space $X$, $C(X)$ is approximated in $C(X,A)$ by subhomogeneous $C^*$-algebras of topological dimension at most $n-1$;
\item
\label{MainThm3}
There exists a $*$-homomorphism $\phi:C(S^{n-1}) \to C(D^n,A)$ such that the diagram
\[
\xymatrix{
C(S^{n-1}) \ar[r]^\phi \ar[dr]_{\iota} & C(D^n,A) \ar[d]_{r} \\
& C(S^{n-1},A),
}
\]
commutes, where $\iota$ denotes the inclusion map and $r:C(D^n,A) \to C(S^{n-1},A)$ denotes the restriction map.
\end{enumerate}
If $A \cong A \otimes A$ (using any tensor norm), then these are also equivalent to statement (i) with the words ``$n$-dimensional'' removed; that is, that for any Hausdorff space $X$, $C(X)$ is approximated in $C(X,A)$ by commutative $C^*$-algebras of topological dimension at most $n-1$.
\end{thm}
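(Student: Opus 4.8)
The implication from the ``$n$-dimensional''-removed statement back to \eqref{MainThm1} is immediate, since every $n$-dimensional space is in particular compact Hausdorff; so the content is to deduce the strengthened statement from \eqref{MainThm3} (equivalently \eqref{MainThm1}) under the hypothesis $A\iso A\otimes A$. The plan is to organize the argument around one new ingredient---a join construction that promotes the relative retract in dimension $n$ to one in every higher dimension---followed by an iteration and a standard reduction to finite complexes. First I would record that a solution $\phi$ of \eqref{eq:RelRetract} in dimension $m$ is the same as an $m$-tuple of commuting self-adjoint elements $x_1,\dots,x_m\in C(D^m,A)$ with $\sum_i x_i^2=1$ and $x_i|_{S^{m-1}}$ equal to the $i$-th coordinate function times $1_A$; equivalently, a continuous map $D^m\to\mathcal R_m(A)$ restricting to the inclusion of the scalar sphere on the boundary, where $\mathcal R_m(A)$ is the space of such commuting tuples in $A$.

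The key lemma I would prove is a \emph{join lemma}: if the relative retract holds in dimension $p$ over $A$, then it holds in dimension $p+q$ over $A\otimes A$ for every $q\ge0$. Writing $D^{p+q}\iso D^p*S^{q-1}$, with boundary $S^{p-1}*S^{q-1}=S^{p+q-1}$, I would take the given extension $F\colon D^p\to\mathcal R_p(A)$ of the scalar $(p-1)$-sphere together with the tautological scalar map $S^{q-1}\to\mathcal R_q(A)$, and define on the join the tuple obtained by scaling the two blocks by $\cos(\tfrac\pi2\alpha)$ and $\sin(\tfrac\pi2\alpha)$ along the join parameter $\alpha$ and placing them in the two legs of $A\otimes A$. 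One checks that the entries commute, that the sum of their squares is $1$, and that on $S^{p-1}*S^{q-1}$ the tuple is scalar and equal to the join of the two coordinate spheres---that is, exactly the scalar $(p+q-1)$-sphere. Absorbing $A\otimes A\iso A$ then gives the relative retract in dimension $p+q$ over $A$, and taking $p=n$, $q=m-n$ yields it in every dimension $m\ge n$.

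Next I would treat a compact Hausdorff space $X$ of finite covering dimension $m$. Using $A\iso A\otimes A$ I fix a factorization $A\iso A_1\otimes\cdots\otimes A_k$ with $k=m-n+1$ and each $A_i\iso A$, and iterate the one-dimension-at-a-time reduction from Section \ref{sec:4}. Since the relative retract holds in dimension $m$ over $A_1$, that result approximates $C(X)$ in $C(X,A_1)\subseteq C(X,A)$ by a commutative subalgebra $C(Y_1)$ with $\dim Y_1\le m-1$. As $C(Y_1)$ lies in the first two tensor legs $C(X)\otimes A_1$, it commutes with $A_2$, so $C(Y_1)$ and $A_2$ generate a copy of $C(Y_1,A_2)$ inside $C(X,A)$; applying Section \ref{sec:4} again, now in dimension $m-1$ over $A_2$, produces a commutative subalgebra of dimension $\le m-2$ still sitting in $C(X,A)$ and commuting with $A_3,\dots,A_k$. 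Iterating $k$ times drops the dimension to $m-k=n-1$, with the final commutative subalgebra approximating the original finite set to within the accumulated error; every step uses the relative retract only in a dimension between $n$ and $m$, all available by the join lemma. To remove the finite-dimensionality assumption I would pass to a finite simplicial complex: given finite $\mathcal F\subset C(X)$ and $\e>0$, a sufficiently fine finite open cover of $X$ has a finite nerve $K$ and a canonical map $\kappa\colon X\to K$ with $\mathcal F\subset_\e\kappa^*(C(K))$. Since $K$ is a finite complex it is finite-dimensional, so the previous step approximates $C(K)$ in $C(K,A)$ by a commutative subalgebra of dimension $\le n-1$; pushing this forward along $\kappa^*\otimes\id_A\colon C(K,A)\to C(X,A)$ gives a commutative subalgebra of $C(X,A)$ of dimension $\le n-1$ (the dimension cannot increase, as the image is a quotient) approximating $\mathcal F$, which is statement \eqref{MainThm1} with ``$n$-dimensional'' removed.

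The step I expect to be the main obstacle is the join lemma, and specifically the verification that the joined tuple restricts on the boundary to \emph{exactly} the scalar sphere, so that the strict commuting square \eqref{eq:RelRetract} is solved rather than merely solved up to homotopy. What makes this go through is that the given extension $F$ is already scalar on $\partial D^p=S^{p-1}$, so joining it with the genuinely scalar $q$-sphere produces scalar boundary data with no collapsing. By contrast, the bookkeeping in the iteration---keeping each successive commutative subalgebra inside a controlled block of tensor legs so that a fresh commuting copy of $A$ is available for the next reduction---is routine once the factorization $A\iso A_1\otimes\cdots\otimes A_k$ is fixed, and the reduction to finite complexes is standard.
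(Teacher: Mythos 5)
You prove only the final sentence of the theorem. Your opening move---``the content is to deduce the strengthened statement from (\ref{MainThm3}) (equivalently (\ref{MainThm1}))''---assumes precisely the three-way equivalence that constitutes the bulk of the statement, and your iteration even invokes the implication (\ref{MainThm3}) $\Rightarrow$ (\ref{MainThm1}) (``the one-dimension-at-a-time reduction from Section \ref{sec:4}'') as a black box. None of the following, which is the substance of the paper's proof, appears in your proposal: (\ref{MainThm2}) $\Rightarrow$ (\ref{MainThm1}), which needs the reduction of subhomogeneous approximants to commutative ones via noncommutative cell complexes and the gluing Lemma \ref{lem:CommGluing}, packaged as the commuting $(n+1)$-colourable partitions of unity of Proposition \ref{prop:CoveringTFAE}; (\ref{MainThm1}) $\Rightarrow$ (\ref{MainThm3}), which needs the nerve construction of an approximate retract (Lemma \ref{lem:SnNerve}, Proposition \ref{prop:ApproxRetract}) followed by exactification using homotopy rigidity of nearby maps into the ANR $S^{n-1}$ (Lemma \ref{lem:SnHomotopy}) and a mapping-cylinder gluing; and (\ref{MainThm3}) $\Rightarrow$ (\ref{MainThm1}) itself, the Kirchberg--R{\o}rdam cell-patching argument. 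As a proof of the stated theorem your attempt is therefore incomplete, and where it cites Section \ref{sec:4} it is circular.

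That said, the part you do address---the last sentence---is correct, and your route differs from the paper's in an interesting way. The paper proves it by running induction \emph{inside} the patching argument, producing approximate factorizations $C(D^m) \to C(\Gamma) \to C(D^m, A^{\otimes(m-n+1)})$ with $\dim \Gamma \leq n-1$ agreeing with restriction on $S^{m-1}$. Your join lemma instead upgrades the \emph{exact} retract: identifying $D^{p+q} \iso D^p * S^{q-1}$ via $(x,y,\alpha) \mapsto (\cos(\tfrac{\pi}{2}\alpha)x, \sin(\tfrac{\pi}{2}\alpha)y)$, placing the given commuting tuple and the scalar sphere in the two legs of $A \otimes A$, and checking the sum of squares and boundary conditions; the point you flag (exactly scalar boundary data) does go through because the given extension is already scalar on $S^{p-1}$ and this homeomorphism carries $S^{p-1} * S^{q-1}$ onto $S^{p+q-1}$. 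This yields condition (\ref{MainThm3}) itself in every dimension $m \geq n$ over $A \iso A \otimes A$, which is cleaner and somewhat stronger than the paper's approximate factorization; your subsequent tensor-leg iteration (each commutative approximant commuting with a fresh copy of $A$, with dimension not increasing under quotients) and your nerve-based reduction of arbitrary compact Hausdorff $X$ to finite complexes (where the paper uses inverse limits of CW complexes) are both sound modulo routine error bookkeeping. In short: a correct and genuinely different treatment of the final sentence, but a proof of only a fragment of the theorem.
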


Well-known topological arguments can be used to reformulate (\ref{MainThm3}) as: the inclusion map $C_0(\R^{n-1}) \to C_0(\R^{n-1},A)$ is nullhomotopic.
By an easy $K$-theoretic obstruction to (\ref{MainThm3}), we obtain:

\begin{cor}
\label{cor:MainCor}
Neither $C(D^n,U)$ (where $U$ is UHF) nor $C(D^n,\mathcal Z)$ are locally approximated by subhomogeneous algebras of topological dimension less than $n$.
\end{cor}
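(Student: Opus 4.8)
The plan is to invoke Theorem~\ref{thm:MainThm} and reduce the entire statement to the failure of its condition~(iii) for $A = U$ and $A = \mathcal{Z}$; all of the topology is then already done for us, and the only new ingredient is a short $K$-theory computation. Suppose, toward a contradiction, that $C(D^n, A)$ is locally approximated by subhomogeneous algebras of topological dimension at most $n-1$. Since $C(D^n) = C(D^n, \C 1_A)$ is a $C^*$-subalgebra of $C(D^n, A)$, every finite subset of $C(D^n)$ lies within $\e$ of such a subhomogeneous subalgebra of $C(D^n, A)$; that is, $C(D^n)$ is approximated in $C(D^n, A)$ by subhomogeneous algebras of dimension at most $n-1$. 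As $D^n$ is an $n$-dimensional compact Hausdorff space, this is exactly condition~(ii) at the single space $X = D^n$. The implication (ii)$\Rightarrow$(iii) of Theorem~\ref{thm:MainThm} proceeds one space at a time, so applying it at $X = D^n$ produces a $*$-homomorphism $\phi$ solving the retract diagram of condition~(iii). It therefore suffices to show that condition~(iii) admits no solution when $A \in \{U, \mathcal{Z}\}$.

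For this I would use the reformulation recorded just after Theorem~\ref{thm:MainThm}: condition~(iii) holds if and only if the inclusion $\iota \colon C_0(\R^{n-1}) \to C_0(\R^{n-1}, A) \cong C_0(\R^{n-1}) \otimes A$, $f \mapsto f \otimes 1_A$, is nullhomotopic, in which case $\iota_* = 0$ on $K$-theory. Thus it suffices to compute $\iota_*$ and find it nonzero. By Bott periodicity, $K_*(C_0(\R^{n-1}))$ equals $\mathbb Z$ in the degree $d$ with $d \equiv n-1 \pmod 2$ and vanishes in the other degree. Both $U$ and $\mathcal{Z}$ lie in the bootstrap class and have $K_1 = 0$, so the K\"unneth theorem applies, and since $K_*(C_0(\R^{n-1}))$ is free the $\mathrm{Tor}$ term vanishes, giving a natural isomorphism $K_d(C_0(\R^{n-1}) \otimes A) \cong \mathbb Z \otimes K_0(A) \cong K_0(A)$. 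Under this identification $\iota_*$ becomes the homomorphism $\mathbb Z \to K_0(A)$, $1 \mapsto [1_A]$. Now $K_0(U)$ is a nonzero subgroup of $\mathbb Q$ with $[1_U] = 1$, while $K_0(\mathcal{Z}) = \mathbb Z$ with $[1_{\mathcal{Z}}] = 1$; in either case $K_0(A)$ is torsion-free and $[1_A] \neq 0$, so $\iota_*$ is injective, hence nonzero. This contradicts nullhomotopy, so condition~(iii) fails and the corollary follows.

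The $K$-theory is genuinely routine, exactly as the preceding remark advertises; the step demanding the most care is the logical bookkeeping of the first paragraph, namely confirming that local approximation of the whole algebra $C(D^n, A)$ triggers Theorem~\ref{thm:MainThm} at the one space $X = D^n$, and that the inherited approximants are legitimately the subhomogeneous subalgebras of $C(D^n, A)$ required there. I would also emphasise that the entire obstruction is driven by the single fact that $[1_A]$ is a nonzero, non-torsion element of $K_0(A)$, which is precisely why the argument runs unchanged for $\mathcal{Z}$ despite $K_0(\mathcal{Z})$ being merely $\mathbb Z$, and for every UHF algebra regardless of its supernatural number.
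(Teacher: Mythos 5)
Your proposal is correct and takes essentially the same route the paper intends: the paper derives the corollary precisely from Theorem \ref{thm:MainThm} together with the remark immediately preceding it (condition (\ref{MainThm3}) is equivalent to nullhomotopy of the inclusion $C_0(\R^{n-1}) \to C_0(\R^{n-1},A)$), and the ``easy $K$-theoretic obstruction'' left implicit there is exactly the computation you spell out, that $\iota_*$ sends the Bott generator to $[1_A] \neq 0$ in the torsion-free group $K_0(A)$. Your single-space bookkeeping is also legitimate: the paper's passage from (\ref{MainThm2}) to (\ref{MainThm3}) (Proposition \ref{prop:CoveringTFAE}, then Proposition \ref{prop:ApproxRetract} and Lemma \ref{lem:SnHomotopy}) only ever invokes the approximation hypothesis at $X = D^n$.
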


The remainder of the article is devoted to proving Theorem \ref{thm:MainThm}.
In Section \ref{sec:2}, we derive from (\ref{MainThm2}) a covering-dimension-related condition, that $C(X,A)$ contains arbitrarily fine, \textit{commuting} $n$-colourable partitions of unity, in the sense of \cite[Proposition 3.2 (iv)]{TW:Zdr} (this in particular implies (\ref{MainThm1})).
A significant part of this argument is handling the subhomogeneous case, which is aided by \cite[Theorem 2.15]{ENSTW}, which allows us to work with quotients of noncommutative cell complexes in place of arbitrary subhomogeneous approximants.
In Section \ref{sec:3}, we use the commuting partitions of unity from Section \ref{sec:2} to obtain the $A$-relativized retract of $D^n$ onto $S^{n-1}$ (i.e.\ condition (\ref{MainThm3}); the main part of this argument is showing that (\ref{MainThm3}) holds approximately.
Finally, Section \ref{sec:4} shows that (\ref{MainThm3}) implies (\ref{MainThm1}), which is essentially a reproduction of Kirchberg and R\o rdam's argument in the proof of \cite[Proposition 3.5]{KirchbergRordam:pi3}.

For elements $a$ and $b$ of a $C^*$-algebra $A$, and a real number $\e > 0$, we will use the notation $a \approx_\e b$ to mean $\|a-b\|<\e$.
If $A$ is a $C^*$-algebra and $a \in A$, then $\mathrm{sp}(a)$ denotes the spectrum of $a$.
For a compact topological space $X$, $CX$ denotes the cone 
\begin{equation}
\label{eq:Cone} CX:= X \times [0,1]/(X \times \{1\}).
\end{equation}

\subsection{Acknowledgements}
The author would like to thank Luis Santiago for discussions that prompted this research, and Ilijas Farah for helpful feedback on an earlier preprint.

\section{Simplicial complexes}
\label{sec:SimpComplex}

In this section we review some basic notions and results about simplicial complexes for use in the sequel.
For a more thorough treatment, see \cite[VIII.5]{Dugundji}.
Let $V$ be a finite set.
Recall that an (abstract, finite) simplicial complex on the vertex set $V$ is a subset $N$ of the power set of $V$, containing all singletons, and closed under taking subsets.
If $N$ is a simplicial complex, each set in $N$ is called a face or a simplex of $N$.
If $N$ is a simplicial complex with vertex set $V$, we use $|N|$ to denote its geometric realization.
Concretely, this can be realized as the subset of $[0,1]^V$ consisting of all $(\lambda_v)_{v\in V}$ such that:
\begin{enumerate}
\item $\sum_v \lambda_v = 1$; and 
\item For any $v_1,\dots,v_k \in V$, if $\{v_1,\dots,v_k\} \not\in N$ then
\[ \lambda_{v_1} \cdots \lambda_{v_k} = 0. \]
\end{enumerate}
For a vertex $v_0 \in V$, $\mathrm{Star}(v_0)$ denotes the star around $v_0$, that is, the open subset of $|N|$ made up of the union of the interiors of the realization of faces containing $v_0$.
With the above description of the geometric realization, $\mathrm{Star}(v)$ is precisely all $(\lambda_v)_{v\in V} \in |N|$ such that $\lambda_{v_0} \neq 0$.

We say that $N$ has dimension at most $n$ if every face of $N$ contains at most $n+1$ vertices.
This is the same as saying that the covering dimension of $|N|$ is at most $n$.

Let $N,N'$ be simplicial complexes with vertex sets $V,V'$ respectively.
A morphism from $N$ to $N'$ is a map $\alpha:V \to V'$ which takes each set in $N$ to a set in $N'$.
A morphism from $N$ to $N'$ induces a continuous map $\hat\alpha:|N|$ to $|N'|$ which takes the realization of a simplex $\sigma \in N$ to the realization of $\alpha(\sigma)$, namely
\[ \hat\alpha((\lambda_v)_{v\in V}) = (\mu_{v'})_{v' \in V'} \]
where $\mu_{v'} = \sum_{v \in \alpha^{-1}(v)} \lambda_v$.
In particular, $\hat\alpha(\mathrm{Star}(v))=\mathrm{Star}(\alpha(v))$ for every $v\in V$.

The following result follows almost immediately from the above description of the geometric realization of a simplicial complex.

\begin{prop}
\label{prop:SimpComplexCstar}
Let $N$ be a simplicial complex with vertex set $V$.
Then $C(|N|)$ can be canonically identified with the universal $C^*$-algebra with generator set $\{e_v \mid v \in V\}$ and the following relations:
\begin{enumerate}
\item Each $e_v$ is a positive contraction;
\item The $e_v$s commute;
\item For each $v_1,\dots,v_k \in V$, if $\{v_1,\dots,v_k\} \not\in N$ then
\[ e_{v_1}\dots e_{v_k} = 0; \quad \text{and} \]
\item $\sum_{v \in V} e_v = 1$.
\end{enumerate}
With this identification, for each vertex $v\in V$, we have
\[ \mathrm{Star}(v) = e_v{-1}((0,1]). \]
\end{prop}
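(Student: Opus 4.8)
The plan is to realize the universal $C^*$-algebra attached to the relations as continuous functions on its character space, and then identify that character space with $|N|$ via the two conditions defining the geometric realization. Write $D$ for the universal $C^*$-algebra presented by generators $\{e_v\}_{v\in V}$ and relations (1)--(4). Since relation (1) forces each $e_v$ to have norm at most $1$ in every representation, the relations are admissible and $D$ exists; since the generators are self-adjoint (indeed positive) and commute by relation (2), $D$ is commutative, and relation (4) makes it unital. Hence $D \cong C(\widehat D)$, where $\widehat D$, the space of characters of $D$ with the weak-$*$ topology, is compact and Hausdorff.

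The key step is to produce mutually inverse homeomorphisms between $\widehat D$ and $|N|$. In one direction, a character $\chi$ of $D$ is determined by the scalars $\chi(e_v)$, which lie in $[0,1]$ by relation (1); relation (4) gives $\sum_v \chi(e_v) = 1$, and relation (3) gives $\chi(e_{v_1})\cdots\chi(e_{v_k}) = 0$ whenever $\{v_1,\dots,v_k\}\notin N$, so the tuple $(\chi(e_v))_{v\in V}$ satisfies precisely the two conditions cutting out $|N| \subseteq [0,1]^V$. This yields a continuous injection $j\colon \widehat D \to |N|$, continuous because the weak-$*$ topology restricts to the topology of pointwise convergence of the coordinates $\chi \mapsto \chi(e_v)$. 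In the other direction, I would invoke the universal property of $D$: the coordinate functions $\hat e_v \in C(|N|)$, defined by $\hat e_v((\lambda_w)_{w\in V}) = \lambda_v$, are commuting positive contractions that sum to $1$ and satisfy the vanishing relation (3) exactly because condition (2) in the description of $|N|$ holds pointwise; thus there is a unique $*$-homomorphism $\pi\colon D \to C(|N|)$ with $\pi(e_v) = \hat e_v$. Its transpose sends $x = (\lambda_v) \in |N|$ to the character $e_v \mapsto \lambda_v$, and a direct check on generators shows that this transpose and $j$ are mutually inverse. As a continuous bijection between compact Hausdorff spaces $j$ is then a homeomorphism, whence $\pi$ is the desired isomorphism $D \cong C(|N|)$.

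Once this identification is fixed, the final assertion is immediate: under $\pi$ the generator $e_v$ becomes the coordinate function $\hat e_v$, so $e_v^{-1}((0,1]) = \{(\lambda_w)_{w\in V} \in |N| : \lambda_v \neq 0\}$, which is exactly the description of $\mathrm{Star}(v)$ recorded just above the statement.

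I do not expect a serious obstacle: the content is entirely the translation between relations (1)--(4) and the two conditions defining $|N|$, combined with standard Gelfand duality for unital commutative $C^*$-algebras. The only point needing a little care is that the weak-$*$ topology on $\widehat D$ agrees with the subspace topology inherited from $[0,1]^V$, which holds because both are the topology of pointwise convergence of the coordinates $\chi \mapsto \chi(e_v)$ and the $e_v$ generate $D$. Alternatively, surjectivity of $\pi$ follows from Stone--Weierstrass, since the $\hat e_v$ separate points of $|N|$ and the algebra is unital, leaving only injectivity to be extracted from the bijectivity of $j$.
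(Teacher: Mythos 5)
Your proof is correct and takes essentially the route the paper intends: the paper states this proposition without proof, remarking only that it ``follows almost immediately'' from the coordinate description of $|N| \subseteq [0,1]^V$, and your argument---Gelfand duality identifying characters of the universal algebra with tuples $(\chi(e_v))_{v\in V}$ satisfying exactly the two defining conditions of $|N|$, with the universal property supplying the inverse map---is precisely that immediate argument written out in full. Your verification that $e_v^{-1}((0,1]) = \{(\lambda_w)_{w \in V} \in |N| : \lambda_v \neq 0\} = \mathrm{Star}(v)$ likewise matches the paper's stated coordinate description of stars, so nothing is missing.
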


Given a commuting finite partition of unity $(f_i)_{i \in I}$ in a $C^*$-algebra $A$ (that is, a finite set of commuting positive elements that sum to $1_A$), we define the \textbf{nerve} of $(f_i)_{i\in I}$ to be the following simplicial complex $N((f_i)_{i\in i})$ with vertex set $I$:
\[ N((f_i)_{i\in I}) := \Big\{\,\{i_1,\dots,i_k\} \subseteq I \mid f_{i_1}\dots f_{i_k} \neq 0\Big\}. \]
By Proposition \ref{prop:SimpComplexCstar}, we obtain a canonical map $C(|N((f_i)_{i\in I})|) \to C^*(\{f_i \mid i \in I\})$, sending $e_i$ to $f_i$.

\section{Subhomogeneous approximations and covering dimension}
\label{sec:2}

In this section, we prove that conditions (\ref{MainThm1}) and (\ref{MainThm2}) of Theorem \ref{thm:MainThm} are equivalent, and are equivalent to another, somewhat technical, covering-dimension-related condition.

We begin with a technical lemma, that will allow us to turn approximations in noncommutative cell complexes (defined in \cite{ENSTW}) into commutative approximations.

\begin{lemma}
\label{lem:CommGluing}
Let $X$ be a compact metric spaces, let $A_0$ be a unital $C^*$-algebra, and let $\phi:A_0 \to C(X,M_k)$ be a $*$-homomorphism for some $k\in\N$.
Define $A$ by the pullback diagram
\[
\begin{array}{rcl}
A & \labelledrightarrow{\rho} & C(CX,M_k) \\
{\scriptstyle \lambda} \downarrow && \downarrow {\scriptstyle f \mapsto f|_{X \times \{0\}}} \\
A_0 & \labelledrightarrow{\phi} & C(X,M_k).
\end{array}
\]
Let $f_1,\dots,f_m \in A_+$ and let $\e > 0$.
Suppose that $B_0$ is a commutative $C^*$-subalgebra of $A_0$, $1_{A_0} \in B_0$, and $g_0,\dots,g_m \in (B_0)_+$ are such that
\[ \mathrm{sp}(\pi(g_i)) \subset [\min \mathrm{sp}(\pi(\lambda(f_i))) - \e, \max \mathrm{sp}(\pi(\lambda(f_i)))+\e] \]
for all representations $\pi$ of $A_0$ and all $i=1,\dots,m$.
There exists a commutative unital $C^*$-subalgebra $B$ of $A$ and $h_1,\dots,h_m \in B$ such that $\lambda(h_i) = g_i$ and
\begin{equation}
\label{eq:CommGluingCrucialRTP}
\mathrm{sp}(\pi(h_i)) \subset [\min \mathrm{sp}(\pi(f_i)) - 2\e, \max \mathrm{sp}(\pi(f_i))+2\e]
\end{equation}
for all representations $\pi$ of $A$ and all $i=1,\dots,m$.
Moreover, the dimension of the spectrum of $B$ is at most the maximum of the dimension of $CX$ and the dimension of the spectrum of $B_0$.
\end{lemma}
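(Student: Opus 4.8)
The plan is to manufacture the lift $h_i$ by bending the fibre values $\phi(g_i)(x)$ across the cone, clamping their eigenvalues into a moving window read off from $\rho(f_i)$ and collapsing everything to a scalar at the apex. For $y=(x,t)\in CX$ write $a_i(y)=\min\mathrm{sp}(\rho(f_i)(y))$ and $b_i(y)=\max\mathrm{sp}(\rho(f_i)(y))$; these are continuous on $CX$, and they are constant on the apex because $\rho(f_i)$ descends to $CX$. Let $s\in C(CX)$ be the cone height ($s(x,t)=t$, so $s=0$ on the base and $s=1$ at the apex), fix $c_i=a_i(\mathrm{apex})$, and with $\theta_{[\alpha,\beta]}(r)=\max(\alpha,\min(\beta,r))$ set
\[ \tilde g_i(y) = \theta_{[a_i(y)-2\e,\, b_i(y)+2\e]}\bigl((1-s(y))\,\phi(g_i)(x) + s(y)\,c_i 1\bigr), \]
the right side being continuous functional calculus of a single self-adjoint matrix. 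I would then put $h_i=(g_i,\tilde g_i)$ and $B=C^*(\{1\}\cup\{h_i\})$ (also lifting $g_0$, and if needed further generators of $B_0$, by the same formula with an inert clamp, so as to capture all of $B_0$ under $\lambda$).

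Three elementary points must be checked, all of which I expect to be routine. First, $h_i$ really lies in the pullback: at $t=0$ the height $s$ vanishes and, by the hypothesis on $g_i$, the eigenvalues of $\phi(g_i)(x)$ already lie in $[a_i-\e,b_i+\e]$, so the clamp is inert and $\tilde g_i|_{X\times\{0\}}=\phi(g_i)=\phi(\lambda(h_i))$. Second, $\tilde g_i$ descends to $CX$, since at the apex $s=1$ forces $\tilde g_i=\theta(c_i1)=c_i1$, independent of $x$. Third, the $h_i$ commute, because each $\tilde g_i(y)$ is a continuous function of the single matrix $\phi(g_i)(x)$, and the various $\phi(g_\cdot)(x)$ commute as they lie in the commutative algebra $\phi(B_0)$ evaluated at $x$.

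The estimate \eqref{eq:CommGluingCrucialRTP} I would verify on irreducible representations, of which the pullback has exactly two kinds. Those factoring through $\lambda$ are $\sigma\circ\lambda$ for $\sigma$ irreducible on $A_0$; here $\pi(h_i)=\sigma(g_i)$ and $\pi(f_i)=\sigma(\lambda(f_i))$, so the required inclusion is precisely the hypothesis, with room to spare (it holds already with $\e$ in place of $2\e$). The remaining irreducibles are the point evaluations $\mathrm{ev}_y\circ\rho$ at $y$ in the open cone (for $t>0$ the map $A\to M_k$ is onto); for these $\pi(h_i)=\tilde g_i(y)$, whose eigenvalues lie in $[a_i(y)-2\e,b_i(y)+2\e]$ by the outer clamp, and $[a_i(y),b_i(y)]$ is exactly $[\min\mathrm{sp}(\pi(f_i)),\max\mathrm{sp}(\pi(f_i))]$. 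A standard reduction handles all representations, since passing to a direct sum only widens the interval $[\min\mathrm{sp}(\pi(f_i)),\max\mathrm{sp}(\pi(f_i))]$.

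The genuinely delicate part, which I expect to absorb most of the work, is the dimension bound, and here one must be careful: a commutative subalgebra generated by matrix-valued functions can have strictly larger spectral dimension than the base space (a Peano-curve phenomenon), so the estimate cannot follow from soft ``subalgebra'' reasoning and must use the construction. I would proceed from the pullback structure of $B$ itself: with $C=\phi(\lambda(B))$ one has $B=\lambda(B)\oplus_C\rho(B)$, so $\mathrm{sp}(B)$ is the union of the closed subspaces $\mathrm{sp}(\lambda(B))$ and $\mathrm{sp}(\rho(B))$ glued along $\mathrm{sp}(C)$, and the union theorem for covering dimension yields $\dim\mathrm{sp}(B)=\max\bigl(\dim\mathrm{sp}(\lambda(B)),\dim\mathrm{sp}(\rho(B))\bigr)$. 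Having arranged $\lambda(B)=B_0$ controls the first term by $\dim\mathrm{sp}(B_0)$, and the base slice of $\mathrm{sp}(\rho(B))$ is the closed set $\mathrm{sp}(C)\subseteq\mathrm{sp}(B_0)$, of dimension at most $\dim\mathrm{sp}(B_0)$. The crux is to bound the open-cone part of $\mathrm{sp}(\rho(B))$ by $\dim CX$: fibrewise over each cone point, $\rho(B)$ lands in a commutative subalgebra of $M_k$ and so contributes at most $k$ characters, which should make this part at most $k$-to-one over $CX$. The main obstacle is to run this fibre count so that the collapse is tied efficiently to $CX$ (rather than behaving like a free cone over the base slice, which would cost an extra dimension) and so that the continuous clamping map introduces no dimension increase; making this rigorous — presumably via the finite-to-one structure over $CX$ together with the simplicial/nerve machinery set up above — is where I expect the real difficulty to lie.
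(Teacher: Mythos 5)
Your construction of the lifts and your verification of everything \emph{except} the dimension clause are essentially sound: the clamp is inert at $t=0$ by the hypothesis applied to the representations $\mathrm{ev}_x\circ\phi$ of $A_0$, the irreducible representations of the pullback are exactly the two families you describe, and the interval-hull reduction to irreducibles is standard. The genuine gap is the dimension bound, and it is worse than merely unfinished: for the algebra $B$ you actually build, the bound can \emph{fail}. Take $k=1$, $X=K$ the Cantor set (so $\dim CX=1$), $A_0=B_0=C([0,1])$, and $\phi(b)=b\circ q$ for a continuous surjection $q:K\to[0,1]$. Let $g_1,g_2\in C([0,1])_+$ be the coordinates of a Peano curve from $[0,1]$ onto a square $[c,c+\epsilon]^2$ with $c>0$, and let $f_1=f_2=(c+\epsilon/2)1_A$. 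The hypothesis of the lemma holds, all your clamps are inert, and your formula gives $\tilde g_i(x,t)=(1-t)g_i(q(x))+t(c+\epsilon/2)$; the resulting $h_i$ satisfy every requirement, but $\lambda(B)=C^*(1,g_1,g_2)$ has spectrum equal to the Peano square, which (since $\lambda|_B$ is surjective onto $\lambda(B)$) sits as a closed $2$-dimensional subset of $\mathrm{sp}(B)$, while $\max(\dim CX,\dim\mathrm{sp}(B_0))=1$. Your proposed repair of lifting every $b\in B_0$ so that $\lambda(B)=B_0$ only relocates the failure into the cone: with $k=1$ the cone characters implement the functionals $b\mapsto(1-t)b(w)+tc_b$ with $w\in q(K)=[0,1]$, and one checks that the open-cone part of $\mathrm{sp}(B)$ contains a $2$-cell --- it behaves exactly like the ``free cone over the base slice'' you were worried about, because your characters remember only $(t,w)$ and not the point $x\in CX$. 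This pinpoints the irreparable step in your sketch: ``fibrewise at most $k$ characters, hence at most $k$-to-one over $CX$.'' Every character of a commutative unital subalgebra $E\subseteq C(Z,M_k)$ does factor through a point evaluation, but generically through evaluations at \emph{many} points, so there is no well-defined map $\mathrm{sp}(E)\to Z$ unless $E$ contains enough scalar-valued functions to separate points of $Z$; already for $k=1$, $E$ is just $C$ of a continuous image of $Z$, and continuous images raise dimension (your relation-space $R$ has small dimension, but $\mathrm{sp}(E)$ is only a continuous image of $R$, with possibly infinite fibres).

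The paper's proof is organized precisely to manufacture those scalar functions, by taking $B$ \emph{large} rather than minimally generated. It fixes a collar $X\times[0,\eta]$ on which $\rho(f_i)$ is $\epsilon$-constant in $t$, takes the lift to be genuinely scalar, namely $\|\rho(f_i)(\cdot)\|$, on all of $CX|_{[\eta,1]}$, interpolates between this scalar and $g_i$ only across the collar, and defines $B$ as the full glued algebra inside $C(CX|_{[\eta,1]})\oplus D\oplus B_0$, where $D$ is the image of \emph{all} of $C(X\times[0,\eta],B_0)$ under $\theta(c)(x,t)=\phi(c(x,t))(x)$. Because $D$ contains the scalar functions $\theta(C(X\times[0,\eta])\cdot 1_{B_0})$, every character of $D$ factors through evaluation at a \emph{unique} point of the collar, giving a continuous finite-to-one map $\mathrm{sp}(D)\to X\times[0,\eta]$ and hence $\dim\mathrm{sp}(D)\le\dim CX$ by \cite[Theorem 1.12.4]{Engelking:DimTheory}; the spectrum of $B$ is then a union of three closed pieces of controlled dimension. (Note also that your identity $\dim\mathrm{sp}(B)=\max(\dim\mathrm{sp}(\lambda(B)),\dim\mathrm{sp}(\rho(B)))$ tacitly assumes $B$ is the full pullback of its two images rather than the subalgebra your generators produce; that part is fixable by enlarging $B$, but the finite-to-one claim over $CX$ is not available for any algebra whose matrix part omits the separating scalars --- to repair your proof you would essentially have to reproduce the paper's collar construction.)
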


\begin{proof}
Fix $\eta > 0$ such that
\[ \rho(f_i)(x,t) \approx_\e \rho(f_i)(x,s) \]
for all $s,t \in [0,\eta]$.
Define $\theta:C(X \times [0,\eta],B_0) \to C(X \times [0,\eta],M_k)$ by
\[ \theta(c)(x,t) = \phi(c(x,t))(x), \]
and set $D$ to be the image of $\theta$.
Evidently, $D$ is commutative.
Due to the way $\theta$ is defined, each irreducible representation of $D$ factors through a point evaluation at some unique point in $X \times [0,\eta]$, and this provides a finite-to-one continuous map from the spectrum of $D$ to $X \times [0,\eta]$.
It follows by \cite[Theorem 1.12.4]{Engelking:DimTheory} that the topological dimension of $D$ is at most the dimension of $X \times [0,\eta]$ (which is the same as the dimension of $CX$).
Set
\begin{align*}
B := &\{(c,d,b) \in C(CX|_{[\eta,1]}) \oplus D \oplus B_0 \\
&\quad \mid
c(x,\eta) = d(x,\eta)\text{ for all }x\in X\text{ and }
\phi(b)=d|_{X \times \{0\}}\}.
\end{align*}
$B$ is evidently commutative, and it embeds into $A$ in a pretty straightforward way: by sending $(c,d,b)$ to $(e,b) \in C(CX,M_k) \oplus A_0$ where
\[ e|_{CX|_{[\eta,1]}} = c \quad \text{and}\quad e|_{CX|_{[0,\eta]}} = d. \]

The spectrum of $B$ is given by gluing together $CX|_{[\eta,1]}$, the spectrum of $D$, and the spectrum of $B_0$.
Hence, the spectrum of $B$ has dimension at most
\begin{align*}
\max\{\dim(CX), \dim(\text{spectrum of }B_0)\}.
\end{align*}

For $i=1,\dots,m$, define $d_i \in C(X \times [0,\eta],B_0)$ by
\[ d_i(x,t) = \frac t\eta \|\rho(f_i)(x,t)\| 1_{B_0} + (1-\frac t\eta)g_i, \]
for all $(x,t) \in X \times [0,\eta]$.
Note that
\begin{align*}
\theta(d_i)(x,\eta) &= \|\rho(f_i)(x,\eta)\|,\quad\text{for all }x\in X, \\
\theta(d_i)(x,0) &= g_i,\quad\text{for all }x \in X,
\end{align*}
and that $\theta(d_i)(x,t)$ is a convex combination of $\|\rho(f_i)(x,t)\|1_{M_k}$ and $\phi(g_i)(x)$ for all $(x,t) \in X \times [0,\eta]$.
Define $c_i:CX|_{[\eta,1]} \to M_k$ by
\[ c_i(x) := \|\rho(f_i)(x)\| \]
Then evidently, $h_i:=(c_i,\theta(d_i),g_i) \in B$ and it is not hard to see that \eqref{eq:CommGluingCrucialRTP} holds.
\end{proof}

Let $X$ be a compact metric space and let $A$ be a $C^*$-algebra.
Let $\mathcal U$ be an open cover of $X$.
Following \cite[Proposition 3.2 (iv)]{TW:Zdr}, an \textbf{$(n+1)$-colourable partition of unity subordinate to $\mathcal U$} means positive elements $b_j^{(i)} \in C(X,A)$ for $i=0,\dots,n, j=1,\dots,r$, such that:
\begin{enumerate}
\item for each $i$, the elements $b_1^{(i)},\dots,b_r^{(i)}$ are pairwise orthogonal,
\item for each $i,j$, the support of $b_j^{(i)}$ is contained in some open set in the given cover $\mathcal{U}$, and
\item $\sum_{i,j} b_j^{(i)} = 1$.
\end{enumerate}

\begin{prop}
\label{prop:CoveringTFAE}
Let $X$ be a compact metric space, let $n \in \N$, and let $A$ be a unital $C^*$-algebra.
The following are equivalent:
\begin{enumerate}[(a)]
\item 
\label{CoveringTFAEa}
$C(X)$ is approximated in $C(X,A)$ by subhomogeneous algebras of topological dimension at most $n$;
\item
\label{CoveringTFAEb}
$C(X)$ is approximated in $C(X,A)$ by abelian $C^*$-algebras whose spectrum has dimension at most $n$;
\item
\label{CoveringTFAEc}
For every open cover $\mathcal U$ of $X$, there exists a \textbf{commuting} $(n+1)$-colourable partition of unity subordinate to $\mathcal U$.
\end{enumerate}
\end{prop}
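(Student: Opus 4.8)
The plan is to prove the cyclic chain (\ref{CoveringTFAEa}) $\Rightarrow$ (\ref{CoveringTFAEc}) $\Rightarrow$ (\ref{CoveringTFAEb}) $\Rightarrow$ (\ref{CoveringTFAEa}), with essentially all of the work concentrated in the first implication. The last implication is immediate: a commutative $C^*$-algebra is subhomogeneous, and for $C(Y)$ the topological dimension in the sense used here is exactly $\dim Y$, so (\ref{CoveringTFAEb}) is merely the special case of (\ref{CoveringTFAEa}) in which the approximating algebras are required to be abelian.

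For (\ref{CoveringTFAEc}) $\Rightarrow$ (\ref{CoveringTFAEb}) I would argue through the nerve construction of Section \ref{sec:SimpComplex}. Given a finite set $\mathcal F \subseteq C(X)$ and $\e > 0$, choose an open cover $\mathcal U$ fine enough that every $f \in \mathcal F$ oscillates by less than $\e$ on each member of $\mathcal U$, and apply (\ref{CoveringTFAEc}) to obtain a commuting $(n+1)$-colourable partition of unity $(b_j^{(i)})$ subordinate to $\mathcal U$. Let $B$ be the unital commutative $C^*$-subalgebra of $C(X,A)$ generated by the $b_j^{(i)}$. The crucial observation is that the nerve $N$ of $(b_j^{(i)})$ has dimension at most $n$: since the elements of a fixed colour are pairwise orthogonal, any face of $N$ contains at most one vertex per colour, hence at most $n+1$ vertices in all. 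By Proposition \ref{prop:SimpComplexCstar} the canonical map $C(|N|) \to B$ is surjective, so the spectrum of $B$ is homeomorphic to a closed subspace of $|N|$ and therefore has dimension at most $n$. Finally, picking for each $(i,j)$ a point $x_j^{(i)}$ in the support of $b_j^{(i)}$ (which lies in some member of $\mathcal U$), the element $\sum_{i,j} f(x_j^{(i)})b_j^{(i)}$ lies in $B$ and approximates $f$ to within a constant multiple of $\e$, the orthogonality within each colour keeping the error controlled; rescaling $\e$ yields (\ref{CoveringTFAEb}).

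The substantial implication is (\ref{CoveringTFAEa}) $\Rightarrow$ (\ref{CoveringTFAEc}). Fix an open cover $\mathcal U$; by compactness and refinement I may assume it is finite, and I fix a scalar partition of unity $(g_U)_{U\in\mathcal U}$ in $C(X)$ subordinate to it. Applying (\ref{CoveringTFAEa}) to $\{g_U\}\cup\{1\}$ produces a subhomogeneous subalgebra of $C(X,A)$ of topological dimension at most $n$ approximately containing these elements, and by \cite[Theorem 2.15]{ENSTW} I may take this approximant to arise as a quotient of a noncommutative cell complex $\tilde C$ of dimension at most $n$. The goal is then to build, inside $\tilde C$, a genuinely \emph{commuting} family approximating $(g_U)$, by induction along the cells of $\tilde C$. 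Each cell attachment is a pullback of exactly the shape treated in Lemma \ref{lem:CommGluing}, with the cone $CX$ there playing the role of the attached cell $D^d$ (so $d \le n$ and $X = S^{d-1}$); starting from the diagonal subalgebra of the finite-dimensional bottom stage (spectrum of dimension $0$), the lemma lets me extend the commutative subalgebra together with its tracked elements across each attachment, keeping their spectra under control and, by its dimension clause $\max\{\dim CX,\dim(\text{spectrum of }B_0)\} \le n$, keeping the spectrum of the commutative subalgebra at dimension at most $n$. Taking the image in the quotient gives a commutative subalgebra $B = C(Y) \subseteq C(X,A)$ with $\dim Y \le n$ that approximately contains the $g_U$.

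It remains to extract from $B$ an exact commuting $(n+1)$-colourable partition of unity subordinate to $\mathcal U$. Writing $\bar g_U \in B$ for the approximants of $g_U$, the sets $\{\bar g_U > \delta\}$ (for suitably small $\delta$) form an open cover of $Y$, and since $\dim Y \le n$ the covering-dimension characterization furnishes a subordinate refinement splitting into $n+1$ families of pairwise disjoint open sets; a subordinate partition of unity on $Y$ is then commuting and $(n+1)$-colourable as an element of $B \subseteq C(X,A)$. Because each of its members is dominated by a multiple of some $\bar g_U$, and $\bar g_U \approx g_U$ is supported in $U$, each member is uniformly small on $X \setminus U$, so a final functional-calculus perturbation converts this approximate, approximately subordinate partition into an exact commuting $(n+1)$-colourable partition of unity subordinate to $\mathcal U$, giving (\ref{CoveringTFAEc}). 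The main obstacle throughout is the noncommutativity of the subhomogeneous approximant: applying a decomposition-rank decomposition directly to $\tilde C$ would yield an $(n+1)$-colourable partition whose colour classes are orthogonal but which need not commute across colours, and it is precisely the reduction to a cell complex together with the inductive gluing of Lemma \ref{lem:CommGluing} that secures commutativity.
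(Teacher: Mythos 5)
Your proposal is correct and rests on exactly the same machinery as the paper's proof: the hard implication is handled, just as in the paper, by passing to noncommutative cell complexes via \cite[Theorem 2.15]{ENSTW} and inducting along the cell attachments with Lemma \ref{lem:CommGluing}, the $(n+1)$-colourable commuting partition is then extracted from the at most $n$-dimensional spectrum of the resulting commutative approximant, and the return direction uses the nerve and Proposition \ref{prop:SimpComplexCstar} (together with the functional-calculus repair of approximate subordinacy and exactness that the paper relegates to its Remark). The only difference is organizational: you run the cycle as (\ref{CoveringTFAEa}) $\Rightarrow$ (\ref{CoveringTFAEc}) $\Rightarrow$ (\ref{CoveringTFAEb}) $\Rightarrow$ (\ref{CoveringTFAEa}), folding the paper's two steps (\ref{CoveringTFAEa}) $\Rightarrow$ (\ref{CoveringTFAEb}) and (\ref{CoveringTFAEb}) $\Rightarrow$ (\ref{CoveringTFAEc}) into your single implication, whereas the paper closes the cycle with (\ref{CoveringTFAEc}) $\Rightarrow$ (\ref{CoveringTFAEa}).
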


\begin{remark}
In \cite[Proposition 3.2]{TW:Zdr}, it is shown that the existence of $(n+1)$-colourable approximate partitions of unity in $C(X,A)$ is equivalent to $\mathrm{dr} (C(X) \subset C(X,A)) \leq n$.
Condition (\ref{CoveringTFAEc}) is a notable strengthening, in that the partition of unity is asked to be commuting.
(Note that the difference between approximate and exact commuting partitions of unity is moot, since any commuting approximate partition of unity can be turned into a commuting exact partition of unity by functional calculus.)
Corollary \ref{cor:MainCor} and \cite[Theorem 4.1]{TW:Zdr} shows that in many cases, 
(\ref{CoveringTFAEc}) is not equivalent to the weaker condition of non-commuting approximate partitions of unity.
\end{remark}

\begin{proof}
(\ref{CoveringTFAEa}) $\Rightarrow$ (\ref{CoveringTFAEb}):
Let $\mathcal F = \{a_1,\dots,a_m\} \subset C(X)$ be a finite set and let $\e > 0$.
Without loss of generality, $\mathcal F$ consists of positive elements.
By \cite[Theorem 2.15]{ENSTW}, every subhomogeneous algebra is approximated by images of noncommutative cell complexes (defined in \cite[Definition 2.1]{ENSTW}) of the same topological dimension.
Therefore, this result and (\ref{CoveringTFAEa}) provide the existence of a noncommutative cell complex $B$ of dimension at most $n-1$, a $*$-homomorphism $\phi:B \to C(X,A)$, and elements $b_1,\dots,b_m \in B$ be such that $a_i \approx_{\e/2} \phi(b_i)$ for $i=1,\dots,m$.
By an inductive argument with Lemma \ref{lem:CommGluing} as the inductive step, there exists a commutative $C^*$-subalgebra $C$ of $B$, whose spectrum has dimension at most $n-1$, and $c_1,\dots,c_m \in C_+$ such that, for every representation $\pi$ of $B$,
\begin{equation}
\label{eq:CoveringTFAEspContain}
\mathrm{sp}(\pi(c_i)) \subseteq [\min \mathrm{sp}(\pi(b_i))-\e/2, \max\mathrm{sp}(\pi(b_i))+\e/2].
 \end{equation}
For each $x\in X$, $\phi(b_i)(x) \approx_{\e/2} a_i(x) \in \mathbb C$, so that
\[  \mathrm{sp}(\phi(b_i)(x)) \subseteq (a_i(x)-\e/2, a_i(x)+\e/2). \]
Using this and \eqref{eq:CoveringTFAEspContain} with $\pi=\phi(\cdot)(x)$, this yields that
\[ \mathrm{sp}(\phi(c_i)(x)) \subseteq (a_i(x)-\e, a_i(x)+\e), \]
and thus, $\phi(c_i) \approx_\e a_i$.

Hence, $C$ and $c_1,\dots,c_m$ witness that (\ref{CoveringTFAEb}) holds.

(\ref{CoveringTFAEb}) $\Rightarrow$ (\ref{CoveringTFAEc}):
Let $\mathcal F$ be a finite partition of unity such that, for each $f \in \mathcal F$, there exists $U_f \in \mathcal{U}$ such that $\mathrm{supp}\, f \subset U_f$.
Use (\ref{CoveringTFAEb}) to obtain a space $Y$ of dimension at most $n$ and a $*$-homomorphism $\phi:C(Y) \to C(X,A)$ such that $\mathcal F \subset_\e \phi(C(Y))$ (for some sufficiently small $\e$ to be defined).
For each $f \in \mathcal F$, let $g_f \in C(Y)$ be such that $f \approx_\e \phi(g_f)$.
By functional calculus, we may assume that $\mathrm{supp}\, \phi(g_f) \subset U_f$.
Note that $1 \approx_{|\mathcal F|\e} \sum_{f \in \mathcal F} \phi(g_f)$.

Set $Y' := \{y \in Y : \sum_{f \in \mathcal F} g_f(y) \geq 1-2|\mathcal F|\e\}$.
It follows that $\ker \phi = C_0(Z)$ for some open set $Z$ such that $Z \cap Y' = \emptyset$.

Since $Y$ has dimension at most $n$, let $(a_j^{(i)})$ be an $(n+1)$-colourable family of positive elements, that is subordinate to $\{g_f^{-1}((0,\infty)): f \in \mathcal F\}$, such that
\[ \sum_{i,j} a_j^{(i)}|_{Y'} = 1_{Y'}. \]
Set
\[ b_j^{(i)} := \phi(a_j^{(i)}). \]
This is $(n+1)$-colourable since $(a_j^{(i)})$ is.
It is subordinate to $\mathcal U$, since if the support of $a_j^{(i)}$ is contained in $g_f^{-1}((0,\infty))$ then the support of $b_j^{(i)}$ is contained in $U_f$.
Finally, $1-\sum_{i,j} a_j^{(i)} \in C_0(Y \setminus Y') \subseteq C_0(Z)$, and so $\sum_{i,j} b_j^{(i)} = 1$.

(\ref{CoveringTFAEc}) $\Rightarrow$ (\ref{CoveringTFAEa}):
This is clear, since the universal $C^*$-algebra generated by a commuting, $(n+1)$-colourable partition of unity $(b_j^{(i)})_{i=0,\dots,n-1,j=1,\dots,r}$ is $C(Z)$ where $Z$ is the geometric realization of an $n$-dimensional simplicial complex, see Proposition \ref{prop:SimpComplexCstar}.
\end{proof}

\section{Disk-to-sphere retracts}
\label{sec:3}

In this section, we show that low-dimensional, commutative approximations of $C(D^n)$ inside $C(D^n,A)$ implies condition (\ref{MainThm3}) of Theorem \ref{thm:MainThm}.
Condition (\ref{MainThm3}) can be viewed as the existence of noncommutative retracts $D^n \to S^{n-1}$, when $D^n$ is enriched by the space $A$.
We first show that we can (point-norm) approximately satisfy condition (\ref{MainThm3}), which is the more difficult step, and then we use a semiprojectivity argument (in the category of commutative $C^*$-algebras) to obtain condition (\ref{MainThm3}) exactly.

Given an open cover $\mathcal U$ of a topological space $X$, we define the $n$-nerve of $\mathcal U$ to be the following simplicial complex on the vertex set $\mathcal U$: 
\[ N_n(\mathcal U) := \big\{\, \{U_1,\dots,U_k\} \subseteq \mathcal U \mid k \leq n\text{ and } U_1 \cap \cdots \cap U_k \neq \emptyset\big\}. \]
(That is, it is the $(n-1)$-dimensional skeleton of the nerve of $\mathcal U$.)

\begin{lemma}
\label{lem:SnNerve}
Let $\mathcal U$ be an open cover of $S^{n-1}$.
Then there exists an open cover $\mathcal V$ of $D^n$ and a continuous map $\alpha:|N_n(\mathcal V)| \to S^{n-1}$ such that, for each $V \in \mathcal V$, if $V \cap S^{n-1} \neq \emptyset$ then 
\begin{enumerate}
\item there exists $U \in \mathcal U$ such that $V \cap S^{n-1} \subseteq U$; and
\item $\alpha(\mathrm{Star}(V)) = V \cap S^{n-1}$.
\end{enumerate}
\end{lemma}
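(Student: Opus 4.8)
The plan is to realize $\mathcal V$ as the open-star cover of a triangulation $K$ of $D^n$ that is \emph{radial} near the boundary sphere, so that $N(\mathcal V)\iso K$ and hence $N_n(\mathcal V)=K^{(n-1)}$, and then to build $\alpha$ as radial projection near $S^{n-1}$, extending it arbitrarily over the interior. First I would replace $\mathcal U$ by a finite subcover and, using a Lebesgue number for $\mathcal U$, choose a triangulation $L$ of $S^{n-1}$ fine enough that each open star $\mathrm{Star}_L(w)$ (with $w$ a vertex of $L$) is contained in some member of $\mathcal U$. Identifying a boundary collar $\{r_1\le \|x\|\le 1\}$ of $D^n$ with $|L|\times[r_1,1]$ via $x\mapsto(x/\|x\|,\|x\|)$, I would triangulate this collar by the standard prism triangulation over $L$, and triangulate the remaining inner ball $\{\|x\|\le r_1\}$ in any way compatible with the scaled copy $r_1\cdot L$ on its boundary sphere (for instance by coning). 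Calling the result $K$ and setting $\mathcal V=\{\mathrm{Star}_K(w): w\in K^{(0)}\}$, one obtains an open cover of $D^n$ whose nerve is canonically $K$, so that $N_n(\mathcal V)$ is exactly the $(n-1)$-skeleton $K^{(n-1)}$.

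Next I would record the two elementary facts that pin down the boundary traces. For an interior vertex $w$ of $K$, every point in the relative interior of a face containing $w$ is a convex combination placing positive weight on the interior point $w$, hence lies in the interior of $D^n$; thus $\mathrm{Star}_K(w)\cap S^{n-1}=\varnothing$ and no condition is imposed on such $w$. For a boundary vertex $w$ one has $\mathrm{Star}_K(w)\cap S^{n-1}=\mathrm{Star}_L(w)$, which lies in a member of $\mathcal U$ by the choice of $L$; this gives conclusion (1). To construct $\alpha$, I would first define it on the subcomplex $M\subseteq K^{(n-1)}$ of faces lying in the collar, as the radial projection $x\mapsto x/\|x\|$. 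Because the collar triangulation is the prism triangulation radially aligned over $L$, this radial projection is the realization of a simplicial morphism $M\to L$ restricting to the identity on $L$; by the identity $\hat\alpha(\mathrm{Star}(w))=\mathrm{Star}(\alpha(w))$ recorded in Section~\ref{sec:SimpComplex}, and since for a boundary vertex $w$ the whole star $\mathrm{Star}_{K^{(n-1)}}(w)$ is contained in $|M|$, this already yields $\alpha(\mathrm{Star}(w))=\mathrm{Star}_L(w)=\mathrm{Star}_K(w)\cap S^{n-1}$, which is conclusion (2).

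It remains to extend $\alpha$ continuously from $|M|$ to all of $|N_n(\mathcal V)|=|K^{(n-1)}|$, and this is the one genuinely topological point. The cells to be filled are the faces of the inner triangulation not already in $M$; they all have dimension at most $n-1$, and $\alpha$ is already prescribed on the inner boundary sphere $|r_1\cdot L|\iso S^{n-1}$. Since $S^{n-1}$ is $(n-2)$-connected, the obstruction to extending a map into $S^{n-1}$ across a cell of dimension $k\le n-1$ lives in $\pi_{k-1}(S^{n-1})=0$, so the extension exists; as it only changes $\alpha$ on the interior, where conclusion (2) is vacuous, both conclusions are preserved. The main obstacle is exactly this reconciliation: conclusion (2) forces $\alpha$ to be \emph{local} (star-preserving) near $S^{n-1}$, which forbids collapsing the interior of the disk to a point and in particular rules out taking $\mathcal V$ to be a single radial thickening of $\mathcal U$ together with one central set (whose nerve is a cone, so that the apex would be forced to map into every boundary star simultaneously). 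The collar keeps each boundary star inside the radial region where $\alpha$ is simplicial, and $(n-2)$-connectivity of $S^{n-1}$ disposes of the interior harmlessly.
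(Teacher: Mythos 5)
Your proposal is correct and takes essentially the same route as the paper: both triangulate $S^{n-1}$ finely so that vertex stars refine $\mathcal U$, form $\mathcal V$ by radially thickening these stars into the disk (the paper uses two layers $V_0(x),V_1(x)$ plus a single interior cap set, you use the open-star cover of a triangulation of $D^n$ with a prism collar), define $\alpha$ on the boundary-adjacent subcomplex as the realization of the vertex projection onto the sphere triangulation so that condition (ii) follows from star-to-star behaviour of simplicial maps, and extend over the remaining $(n-1)$-dimensional part of the nerve using the vanishing of $\pi_k(S^{n-1})$ for $k \leq n-2$. The differences (an honest triangulation with nerve equal to its skeleton versus the paper's cruder two-level cover, whose nerve is only controlled on the subcomplex $N'$) are implementation details, not a different argument.
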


\begin{proof}
Let $N$ be a simplicial complex with vertex set $X$ such that $|N| \cong S^{n-1}$, such that upon identifying these spaces, each star in $N$ is contained in some set in $\mathcal U$.

View $D^n$ as $CS^{n-1}$ (as in \eqref{eq:Cone}), so that the boundary $S^{n-1}$ is identified with $S^{n-1} \times \{0\}$.
Let $\pi:S^{n-1} \times [0,1] \to D^{n}$ be the quotient map.
For $x \in X$, set
\[ V_0(x) := \pi(\mathrm{Star}(x) \times [0,1/2)), \quad V_1(x) := \pi(\mathrm{Star}(x) \times (0,1)). \]
Then define
\[ \mathcal V := \{V_i(x) \mid i=0,1, x\in X\} \cup \{\pi(S^{n-1} \times (1/2,1])\}, \]
an open cover of $D^n$.

Note that $V_0(x) \cap \pi(S^{n-1} \times (1/2,1]) = \emptyset$, so that if $\{W_1,\dots,W_k\} \in N_{n}(\mathcal V)$ then either:
\begin{enumerate}[(a)]
\item Each $W_j$ is of the form $V_i(x)$ for some $i=0,1$ and $x\in X$; or
\item Some $W_j$ is equal to $\pi(S^{n-1} \times (1/2,1])$ and for every other $j$, $W_j$ is of the form $V_1(x)$ for some $x \in X$.
\end{enumerate}
Let $N'$ be the subcomplex of $N_n(\mathcal V)$ consisting of simplices $\{W_1,\dots,W_k\}$ of the first type; then the map $V_i(x) \to x$ induces a simplicial map $N' \to N$, and thereby a continuous map
\[ |N'| \to |N|=S^{n-1}. \]
We set $\alpha|_{|N'|}$ to be this map.
Then, since $|N_{n}(\mathcal V)|$ is $(n-1)$-dimensional, we may extend this map to all of $|N_{n}(\mathcal V)|$.

For $V \in \mathcal V$, if $V\cap S^{n-1} \neq \emptyset$ then $V= V_0(W)$ for some $W \in \mathcal W$.
In this case, $\mathrm{Star}(V)$ is contained entirely in $|N'|$, so that its image is precisely $W=V \cap S^{n-1}$, so that (ii) holds.
By our choice of $\mathcal W$, (i) also holds.
\end{proof}

The following proposition, making use of the covering dimension result in the previous section, establishes that Theorem \ref{thm:MainThm} (\ref{MainThm1}) implies an approximate version of Theorem \ref{thm:MainThm} (\ref{MainThm3}).

\begin{prop}
\label{prop:ApproxRetract}
Let $A$ be a unital $C^*$-algebra and let $n \in \N$.
If $C(D^n)$ is approximated in $C(D^n,A)$ by commutative $C^*$-algebras whose spectrum has dimension at most $(n-1)$, then the following holds.
For any finite set $\mathcal F \subset C(S^{n-1})$ and any $\e > 0$, there exists a $*$-homomorphism $\phi:C(S^{n-1}) \to C(D^n,A)$ such that
\[ a \approx_\e \phi(a)|_{S^{n-1}} \]
for all $a\in\mathcal F$;
\end{prop}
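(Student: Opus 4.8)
The plan is to reformulate the hypothesis as a covering-dimension statement via Proposition \ref{prop:CoveringTFAE}, feed the disk-to-sphere nerve construction of Lemma \ref{lem:SnNerve} a sufficiently fine cover, and then read off $\phi$ from the resulting commuting partition of unity using the nerve functor of Proposition \ref{prop:SimpComplexCstar}. Concretely, I would first fix, by uniform continuity, an open cover $\mathcal U$ of $S^{n-1}$ fine enough that each $a \in \mathcal F$ oscillates by less than $\e$ on every member of $\mathcal U$. Applying Lemma \ref{lem:SnNerve} to $\mathcal U$ produces an open cover $\mathcal V$ of $D^n$ and a continuous map $\alpha:|N_n(\mathcal V)| \to S^{n-1}$ with the two listed properties. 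The hypothesis is exactly condition (\ref{CoveringTFAEb}) of Proposition \ref{prop:CoveringTFAE} for $X = D^n$ with the role of $n$ played by $n-1$, so condition (\ref{CoveringTFAEc}) supplies, subordinate to $\mathcal V$, a commuting $n$-colourable partition of unity $(b_\ell)_{\ell \in I}$ in $C(D^n,A)$; for each $\ell$ choose $V_\ell \in \mathcal V$ with $\mathrm{supp}\, b_\ell \subseteq V_\ell$.

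Next I would pass through the nerve $N := N((b_\ell)_{\ell\in I})$. Because the partition has only $n$ colour classes and elements within a class are orthogonal, every nonzero product $b_{\ell_1}\cdots b_{\ell_k}$ has $k \leq n$, so $N$ has dimension at most $n-1$, and Proposition \ref{prop:SimpComplexCstar} gives a unital $*$-homomorphism $\Psi:C(|N|) \to C(D^n,A)$ with $e_\ell \mapsto b_\ell$. The vertex assignment $\ell \mapsto V_\ell$ is a simplicial morphism $\beta:N \to N_n(\mathcal V)$: if $b_{\ell_1}\cdots b_{\ell_k}\neq 0$ then the sets $V_{\ell_1},\dots,V_{\ell_k}$ share a common point and $k\leq n$, so $\{V_{\ell_1},\dots,V_{\ell_k}\}\in N_n(\mathcal V)$. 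Writing $\gamma := \alpha \circ \hat\beta:|N|\to S^{n-1}$, I define the unital $*$-homomorphism
\[ \phi := \Psi \circ \gamma^* : C(S^{n-1}) \to C(D^n,A). \]

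It remains to estimate $\phi(a)|_{S^{n-1}}$, which is the crux. Fix $x \in S^{n-1}$; since $\phi$ is unital it suffices to bound $\|\phi(a - a(x))(x)\|$. The key structural point is that $\ev_x \circ \Psi:C(|N|)\to A$ factors through characters: every character of $C^*(\{b_\ell(x) \mid \ell\in I\})$ sends $(b_\ell(x))_\ell$ to a point of the closed subcomplex $F_x \subseteq |N|$ spanned by those $\ell$ with $b_\ell(x)\neq 0$, so that $\|\ev_x\Psi(g)\| \leq \sup_{F_x}|g|$ for all $g\in C(|N|)$. Now if $b_\ell(x)\neq 0$ then $x \in V_\ell \cap S^{n-1}$, so by Lemma \ref{lem:SnNerve}(i) there is $U_\ell \in \mathcal U$ with $V_\ell \cap S^{n-1} \subseteq U_\ell$; and for $\mu \in F_x$ with $\mu_\ell > 0$, the point $\hat\beta(\mu)$ lies in $\mathrm{Star}(V_\ell)$, whence by Lemma \ref{lem:SnNerve}(ii) $\gamma(\mu) = \alpha(\hat\beta(\mu)) \in V_\ell \cap S^{n-1} \subseteq U_\ell \ni x$. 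Since $a$ oscillates by less than $\e$ on $U_\ell$, the function $g = \gamma^*(a - a(x))$ satisfies $|g(\mu)| = |a(\gamma(\mu)) - a(x)| < \e$ for all $\mu \in F_x$, so $\|\phi(a)(x) - a(x)1_A\| < \e$. As $x$ is arbitrary, $\phi(a)|_{S^{n-1}} \approx_\e a$. I expect the main obstacle to be precisely this last paragraph: correctly localizing $\ev_x\circ\Psi$ to the subcomplex $F_x$ and marrying the resulting sup-bound to property (ii) of Lemma \ref{lem:SnNerve}; the construction of $\phi$ itself is then bookkeeping with the nerve functors.
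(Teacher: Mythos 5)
Your proposal is correct and follows essentially the same route as the paper's proof: the same fine cover of $S^{n-1}$, Lemma \ref{lem:SnNerve}, the commuting $n$-colourable partition of unity from Proposition \ref{prop:CoveringTFAE}, and the composite $*$-homomorphism through $C(|N|)$ and $C(|N_n(\mathcal V)|)$. The only (harmless) divergence is in the final estimate: the paper first passes to a barycentric refinement so that the whole set $Y$ supporting $\ev_x\circ\phi$ lands in a single member of the original cover, whereas you localize $\ev_x\circ\Psi$ to the full subcomplex $F_x$ and match each point of $\gamma(F_x)$ with $x$ inside some $U_\ell$ individually, which lets you skip the refinement step entirely.
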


\begin{proof}
Let $\mathcal F \subset C(S^{n-1})$ and $\e > 0$ be given.
Let $\mathcal U_0$ be an open cover of $S^{n-1}$ such that for every $a \in \mathcal F$, $U \in \mathcal U_0$ and $x,y \in U$, $\|a(x)-a(y)\| < \e$.
There exists an open cover $\mathcal U$ of $S^{n-1}$ such that, for every $k$ and every $V_1,\dots,V_k \in \mathcal U$, if $V_1 \cap \cdots \cap V_k \neq \emptyset$ then there exists $U \in \mathcal U_0$ such that
\[ V_1 \cup \cdots \cup V_k \subseteq U. \]
(This can be done by taking a barycentric refinement of $\mathcal U_0$, see eg.\ \cite[VIII.3]{Dugundji}; alternatively, it is not hard to do by putting a metric on $S^{n-1}$.)

Let $\mathcal V$ be an open cover of $D^n$ and $\alpha:|N_n(\mathcal V)| \to S^{n-1}$ be a continuous map as provided by Lemma \ref{lem:SnNerve}.
By the hypothesis and Proposition \ref{prop:CoveringTFAE}, let $(b^{(i)}_j)_{i=0,\dots,n;\ j=1,\dots,r}$ be an $n$-colourable commuting partition of unity subordinate to $\mathcal V$.
For each $i,j$, let $V^{(i)}_j \in \mathcal V$ be a set containing the support of $b^{(i)}_j$.

As described in Section \ref{sec:SimpComplex}, let $N$ be the nerve of $(b^{(i)}_j)$ (with $\{(i,j)\mid i=0,\dots,n, j=1,\dots,r\}$ as the set of vertices) and let $\psi:C(|N|) \to C^*(\{b^{(i)}_j\})$ be the canonical $*$-homomorphism.
If $\{b^{(i_1)}_{j_1},\dots,b^{(i_k)}_{j_k}\}$ is a simplex in $N$ then evidently $k \leq n$ (by $n$-colourability), and
\[ V^{i_1}_{j_1} \cap \cdots \cap V^{i_k}_{j_k} \neq \emptyset. \]
Thus, $(i,j) \mapsto V^{(i)}_j$ induces a simplicial map from $N$ to $N_{n}(\mathcal V)$, and thereby a continuous map $\beta:|N| \to |N_{n}(\mathcal V)|$.

Define $\phi:C(S^{n-1}) \to C(D^{n},A)$ to be the following composition
\[ C(S^{n-1}) \labelledrightarrow{f \mapsto f \circ \alpha} C(|N_{n}(\mathcal V)|) \labelledrightarrow{f \mapsto f \circ \beta} C(|N|) \labelledrightarrow{\psi} C^*(\{b^{(i)}_j\}) \subseteq C(D^{n},A). \]
Let us now show that $\|a-\phi(a)|_{S^{n-1}}\| \leq \e$ for $a \in \mathcal F$, by showing that $\|a(x)-\phi(a)(x)\| \leq \e$ for every $x \in S^{n-1}$.

Therefore, fix $x \in S^{n-1}$ and $a \in \mathcal F$.
Let the kernel of $\ev_x \circ \phi$ be $C_0(S^{n-1} \setminus Y)$, where $Y \subseteq S^{n-1}$ is closed; thus, $\ev_x \circ \phi$ can be viewed as a representation of $C(Y)$.
Using the fact that $a(x) \in \C$ and $\phi$ is unital, we see that $\|a(x)-\phi(a)(x)\| \leq \sup_{y \in Y} \|a(x)-a(y)\|$.
We shall show that $Y$ is a subset of some open set in $\mathcal U'$, so that we may conclude that $\|a(x)-\phi(a)(x)\| \leq \e$.

Let the kernel of $\ev_x \circ \psi$ be $C_0(|N| \setminus Z)$, so that $Y \subseteq \alpha(\beta(Z))$ (by the definition of $\phi$).
Set $T:= \{(i,j) \mid b^{(i)}_j(x) \neq 0\}$.
By the definition of $\psi$, we can see that $Z$ is contained in the union of stars about vertices $(i,j)\in T$.
The definition of $\beta$ then ensures that $\beta(Z)$ is contained in the union of stars about vertices $V^{(i)}_j$, where $(i,j) \in T$.
By Lemma \ref{lem:SnNerve} (ii), $\alpha(\mathrm{Star}(V^{(i)}_j)) = V^{(i)}_j \cap S^{n-1}$ in $S^{n-1}$, so that
\begin{equation}
\label{eq:ApproxRetractYsub}
 Y \subseteq \alpha(\beta(Z)) \subseteq \bigcup_{(i,j) \in T} V^{(i)}_j \cap S^{n-1}.
\end{equation}

By definition of $T$, and since the support of $b^{(i)}_j$ is contained in $V^{(i)}_j$, we have
\[ x \in \bigcap_{(i,j) \in T} V^{(i)}_j \cap S^{n-1}. \]
By Lemma \ref{lem:SnNerve} (i), $V^{(i)}_j \cap S^{n-1}$ is a subset of some $U^{(i)}_j \in \mathcal U$.
By our choice of $\mathcal U$, this implies that there exists $U \in \mathcal U_0$ such that
\[ \bigcup_{(i,j) \in T} V^{(i)}_j \cap S^{n-1} \subseteq U. \]
Combined with \eqref{eq:ApproxRetractYsub}, we find that $Y \subseteq U$, as required.
\end{proof}

Our next task is to turn the approximate version of Theorem \ref{thm:MainThm} (\ref{MainThm3}) into an exact version, thus completing the proof of Theorem \ref{thm:MainThm} (\ref{MainThm1}) $\Rightarrow$ (\ref{MainThm3}).
This will use a fact about ANRs, namely that nearby maps into an ANR are homotopic \cite[Theorem IV.1.1]{Hu:RetractBook}, applied to the ANR $S^{n-1}$.
We state (and use) this result in the language of commutative $C^*$-algebras.

\begin{lemma} \cite[Theorem IV.1.1]{Hu:RetractBook}
\label{lem:SnHomotopy}
Let $n \in \N$.
There is a finite set $\mathcal F \subset C(S^{n-1})$ and $\e > 0$ such that the following holds:
If $A$ is a commutative $C^*$-algebra and $\phi_0,\phi_1:C(S^{n-1}) \to A$ are $*$-homomorphisms which satisfy
\[ \phi_0(a) \approx_\e \phi_1(a) \]
for all $a\in \mathcal F$, then $\phi_0$ and $\phi_1$ are homotopic, i.e.\ there is a $*$-homomorphism $\phi:C(S^{n-1}) \to C([0,1],A)$ such that
\[ \phi_i = \ev_i \circ \phi \]
for $i=0,1$.
\end{lemma}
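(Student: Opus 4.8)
The plan is to reinterpret the statement under Gelfand duality and reduce it to the cited topological fact about the compact metric ANR $S^{n-1}$. Write $A = C_0(Y)$ for a locally compact Hausdorff space $Y$. A $*$-homomorphism $\phi : C(S^{n-1}) \to C_0(Y)$ sends the unit to a projection, whose support $U_\phi \subseteq Y$ is compact-open; $\phi$ then corresponds to a continuous map $f_\phi : U_\phi \to S^{n-1}$ via $\phi(a)(y) = a(f_\phi(y))$ for $y \in U_\phi$, and $\phi(a)(y) = 0$ otherwise. Thus $\phi_0,\phi_1$ correspond to maps $f_0 : U_0 \to S^{n-1}$ and $f_1 : U_1 \to S^{n-1}$, and producing a homotopy of $*$-homomorphisms amounts to producing, after arranging $U_0 = U_1 =: U$, a homotopy $H : U \times [0,1] \to S^{n-1}$ from $f_0$ to $f_1$. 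Indeed, such an $H$ (with $U$ compact, so $U \times [0,1]$ compact) dualizes to a unital $*$-homomorphism $C(S^{n-1}) \to C(U \times [0,1]) \cong C([0,1], C(U))$, which composes with extension-by-zero $C(U) \hookrightarrow C_0(Y)$ (a $*$-homomorphism because $U$ is clopen) to give the desired $\phi : C(S^{n-1}) \to C([0,1], A)$, with $\ev_i \circ \phi = \phi_i$ at the endpoints.

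It remains to choose $\mathcal F$ and $\e$ so that the hypothesis both forces $U_0 = U_1$ and makes $f_0, f_1$ close enough to invoke the ANR property. The cited Theorem IV.1.1 of \cite{Hu:RetractBook} provides an open cover $\mathcal V$ of $S^{n-1}$ such that any two $\mathcal V$-near maps (from a fixed domain) are homotopic; passing to a Lebesgue number of $\mathcal V$ for the standard metric yields a single $\delta > 0$, depending only on $n$, such that any two maps $g_0, g_1 : Z \to S^{n-1}$ from a compact space $Z$ with $\sup_z \|g_0(z) - g_1(z)\| < \delta$ are homotopic. Crucially, $\delta$ is independent of the domain, which is what allows the choice to be uniform over all $A$. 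Now embed $S^{n-1} \subseteq \R^n$, let $\mathcal F = \{1, x_1, \dots, x_n\}$ consist of the unit and the coordinate functions, and set $\e := \min\{1, \delta/\sqrt n\}$.

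With these choices, $\e \leq 1$ forces the projections $\phi_0(1)$ and $\phi_1(1)$ to coincide (two projections in a commutative $C^*$-algebra at distance $< 1$ are equal, being indicator functions of compact-open sets), so $U_0 = U_1 = U$. For each coordinate function $x_k$, the estimate $\|\phi_0(x_k) - \phi_1(x_k)\| < \e$ reads $\sup_{y \in U} |x_k(f_0(y)) - x_k(f_1(y))| < \e$, and summing the squares over $k$ gives $\|f_0(y) - f_1(y)\| < \sqrt n\,\e \leq \delta$ for every $y \in U$. Hence $f_0$ and $f_1$ are $\delta$-close, so by the ANR property they are homotopic, and the dualization of the first paragraph finishes the argument. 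The only genuine subtlety beyond this dictionary is the non-unital, possibly disconnected bookkeeping, namely arranging that the two homomorphisms share a domain and that the homotopy extends continuously by zero off $U$; the homotopy itself is handed to us, uniformly in $A$, by the ANR property. For the sphere one could even bypass \cite{Hu:RetractBook} entirely and take $H$ to be the normalized geodesic homotopy $H(y,t) = \frac{(1-t)f_0(y) + t f_1(y)}{\|(1-t)f_0(y) + t f_1(y)\|}$, which is well defined and continuous whenever $f_0, f_1$ are nowhere antipodal, i.e.\ whenever $\delta < 2$.
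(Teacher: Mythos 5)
Your proof is correct and takes essentially the approach the paper intends: the paper gives no argument beyond citing \cite[Theorem IV.1.1]{Hu:RetractBook}, which is exactly the domain-independent ``near maps into an ANR are homotopic'' statement you invoke, translated through Gelfand duality just as you do --- with the unit in $\mathcal F$ (and $\e \leq 1$) correctly forcing the two support projections to coincide, and the coordinate functions converting the norm hypothesis into $\delta$-closeness of the classifying maps $f_0, f_1 : U \to S^{n-1}$. Your closing remark that for $S^{n-1}$ the normalized straight-line homotopy lets one bypass Hu entirely (any $\delta < 2$ works, for an arbitrary domain, so there is no metrizability concern when $A$ is non-separable) is a tidy self-contained refinement that the paper does not record.
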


\begin{proof}[Proof of Theorem \ref{thm:MainThm} (\ref{MainThm1}) $\Rightarrow$ (\ref{MainThm3})]
Suppose that (\ref{MainThm1}) holds, and thereby so does the conclusion of Proposition \ref{prop:ApproxRetract}.
Let $\mathcal F \subset C(S^{n-1})$ and $\e>0$ be given by Lemma \ref{lem:SnHomotopy}.
Use the conclusion of Proposition \ref{prop:ApproxRetract} to get a $*$-homomorphism
\[ \phi_0:C(S^{n-1}) \to C(D^{n},A) \]
such that $\phi_0(a)|_{S^{n-1}} \approx_\e a$ for all $a\in \mathcal F$.
Note that 
\[ B:=C^*(C(S^{n-1}) \cup \phi_0(C(S^{n-1}))|_{S^{n-1}}) \]
is a commutative subalgebra of $C(S^{n-1},A)$.
Therefore by Lemma \ref{lem:SnHomotopy}, there exists $\psi:C(S^{n-1}) \to C([0,1],B)$ such that
\begin{equation}
\label{eq:DimRetractHomotope}
 \ev_0 \circ \psi = \phi_0|_{S^{n-1}}
\end{equation}
and $\ev_1 \circ \psi$ is equal to the trivial inclusion $C(S^{n-1}) \subseteq B$.

$D^{n}$ is homeomorphic to $(D^{n} \cup [0,1] \times S^{n-1})/\sim$, where $\sim$ identifies each point of $\partial D^{n} \cong S^{n-1}$ with the corresponding point of $\{0\} \times S^{n-1}$.
This provides an identification of $C(D^n,A)$ with a subalgebra $C$ of
\[ C(D^n,A) \oplus C([0,1] \times S^{n-1},A), \]
and we see that \eqref{eq:DimRetractHomotope} ensures that the image of the map $\phi_0 \oplus \psi:C(S^{n-1}) \to C(D^{n},A) \oplus C([0,1] \times S^{n-1},A)$ is contained in $C$.
Thus, we may view $\phi_0 \oplus \psi$ as a map $\phi:C(S^{n-1}) \to C(D^n,A)$.
Then for $a \in C(S^{n-1})$,
\[ \phi(a)|_{S^{n-1}} = \psi(a)|_{\{1\} \times S^{n-1}} = a, \]
as required.
\end{proof}

\section{Kirchberg and R\o rdam's argument}
\label{sec:4}

\begin{proof}[Proof of Theorem \ref{thm:MainThm} (\ref{MainThm3}) $\Rightarrow$ (\ref{MainThm1}).]
This is essentially contained in the proof of \cite[Proposition 3.5]{KirchbergRordam:pi3}.

Assume that (\ref{MainThm3}) holds.
Let us first assume that $X$ is a CW complex.
Given a finite subset $\mathcal F$ of $C(X)$ and $\e>0$, let us take a CW decomposition of $X$ so that each $a\in \mathcal F$ varies by at most $\e$ on each cell.
We may view this decomposition as a canonical surjection
\[ \alpha:\coprod_{k=1}^r D_k \to X, \]
where each $D_k$ is homeomorphic to a disc of dimension at most $n$,
and the restriction of $\alpha$ to $\bigcup D_k^{\circ}$ is one-to-one.
Composition with $\alpha$ provides an injective $*$-homomorphism $C(X) \to C(\coprod_{k=1}^r D_k)$; we will identify $C(X)$ with its image under this map.
For each $k$, define $Y_k \subseteq D_k$ and $\phi_k:C(Y_k) \to C(D_k,A)$ as follows:
If $D_k$ has dimension $n$, set $Y_k:= \partial D_k$ and let $\phi_k$ be as given by (\ref{MainThm3}).
Otherwise, set $Y_k := D_k$ and let $\phi_k:C(D_k) \to C(D_k,A)$ be the inclusion.

Note that in both cases, we have $\partial D_k \subseteq \partial Y_k$ and $\phi_k(a)|_{\partial D_k} = a|_{\partial D_k}$ for all $a \in C(D_k)$.
Set $\hat\phi = \bigoplus \phi_k:C(\coprod_{k=1}^r Y_k) \to C(\bigcup_{k=1}^r D_k)$, and set $Y = \alpha(\coprod Y_k)$.
Identify $C(Y)$ with a subalgebra of $C(\coprod_k Y_k)$ via $\alpha$.
Since $\hat\phi(a)|_{\bigcup \partial D_k} = a|_{\bigcup \partial D_k}$, we see that for $a \in C(Y)$,
\[ \hat\phi(a) \in C(X,A). \]
Hence, $\phi:= \hat\phi|_{C(Y)}$ is a map from $C(Y)$ to $C(X,A)$.
For $x \in X$, the kernel of $\ev_x \circ \phi$ is $C_0(Y \setminus K)$ where
\[ K \subseteq \bigcup \{\alpha(D_k) \mid x \in \alpha(D_k)\}. \]
Since each $a \in \mathcal F$ varies by at most $\e$ on each $\alpha(D_k)$, we see that
\[ \phi(a|_Y)(x) \approx_\e a(x). \]
In particular, $\mathcal F$ is approximated by the image of $C(Y)$, thus concluding the proof in the case that $X=D^n$.

Now, any other $n$-dimensional compact Hausdorff space $X$ is an inverse limit of CW complexes of dimension at most $n$, so that $\mathcal F$ can be approximated (up to $\e/2$), inside $C(X)$, by an algebra $C$ isomorphic to $C(Z)$ for some $n$-dimensional CW complex $Z$.
Hence, we may approximate $\mathcal F$ (up to $\e$) inside $C \otimes A \subseteq C(X) \otimes A$ by a commutative algebra of topological dimension at most $n-1$.
\end{proof}

\begin{proof}[Proof of the last sentence of Theorem \ref{thm:MainThm}]
The above argument, and induction, shows that if Theorem \ref{thm:MainThm} (\ref{MainThm3}) holds, then for any $m \geq n$, the inclusion $C(D^m) \to C(D^m,A^{\otimes m})$ can be approximately factorized as
\[ C(D^m) \labelledrightarrow{\psi} C(\Gamma) \labelledrightarrow{\phi} C(D^m,A^{\otimes (m-n+1)}), \]
where $\Gamma$ has dimension at most $n-1$ and $\psi\phi(f)|_{S^{m-1}} = f|_{S^{m-1}}$.
We may identify $A^{m-n+1}$ with $A$, so that for any $m \in \N$, the inclusion $C(D^m) \to C(D^m,A^{\otimes m})$ can be approximately factorized as
\[ C(D^m) \labelledrightarrow{\psi} C(\Gamma) \labelledrightarrow{\phi} C(D^m,A^{\otimes (m-n+1)}), \]
where $\Gamma$ has dimension at most $n-1$ and $\psi\phi(f)|_{S^{m-1}} = f|_{S^{m-1}}$ (this factorization is trivial if $m \leq n-1$).
Then, a patching-together argument as in the above proof shows that (\ref{MainThm1}) holds with the words ``$n$-dimensional'' removed, when $X$ is a CW complex.
Approximating arbitrary $X$ by CW complexes, again as in the above argument, yields (\ref{MainThm1}) with the words ``$n$-dimensional'' removed, in general.
\end{proof}

\newcommand{\cstar}{$\mathrm C^*$}

\end{document}